\newtheorem{theorem}{Theorem}[section]
\newtheorem{lemma}[theorem]{Lemma}
\newtheorem{proposition}[theorem]{Proposition}
\newtheorem{corollary}[theorem]{Corollary}
\theoremstyle{definition}
\newtheorem{definition}[theorem]{Definition}
\theoremstyle{remark}
\numberwithin{equation}{section}
\begin{document}

\title[Shift perturbations of the identity]{Dynamics of perturbations of the identity operator by multiples of the backward shift
on $l^{\infty}(\mathbb{N})$}

\author[G. Costakis]{George Costakis}
\address{Department of Mathematics, University of Crete, Knossos Avenue, GR-714 09 Heraklion, Crete, Greece}
\email{costakis@math.uoc.gr}
\thanks{}

\author[A. Manoussos]{Antonios Manoussos}
\address{Fakult\"{a}t f\"{u}r Mathematik, SFB 701, Universit\"{a}t Bielefeld, Postfach 100131, D-33501 Bielefeld, Germany}
\email{amanouss@math.uni-bielefeld.de} \urladdr{http://www.math.uni-bielefeld.de/$\sim$amanouss}
\thanks{During this research the second author was fully supported by SFB 701 ``Spektrale Strukturen und
Topologische Methoden in der Mathematik" at the University of Bielefeld, Germany.}

\author[A.B. Nasseri]{Amir Bahman Nasseri}
\address{Fachbereich C - Mathematik und Naturwissenschaften Arbeits\-grup\-pe Funktionalanalysis, Bergische Universit\"{a}t Wuppertal, D- 42097 Wuppertal, Germany}
\email{nasseri@math.uni-wuppertal.de}
\thanks{}

\subjclass[2010]{47A16}

\date{}

\keywords{backward shift, locally topologically transitive operator, $J$-class operator, locally topologically
mixing operator, $J^{mix}$-class operator}

\begin{abstract}
Let $B$, $I$ be the unweighted backward shift and the identity operator respectively on
$l^{\infty}(\mathbb{N})$, the space of bounded sequences over the complex numbers endowed with the supremum
norm. We prove that $I+\lambda B$ is locally topologically transitive if and only if $|\lambda |>2$. This, shows
that a classical result of Salas, which says that backward shift perturbations of the identity operator are
always hypercyclic, or equivalently topologically transitive, on $l^p(\mathbb{N})$, $1\leq p<+\infty$, fails to
hold for the notion of local topological transitivity on $l^{\infty}(\mathbb{N})$. We also obtain further
results which complement certain results from \cite{CosMa}.
\end{abstract}

\maketitle

\section{Introduction}
We start with some terminology and notation. Throughout this paper the letter $X$ stands for a Banach space, $T:X\to X$ will always be a bounded linear operator and
frequently we shall drop the words ``bounded linear" for the sake of simplicity. The symbol $L(X)$ stands for the set of all bounded linear operators acting on $X$. As
usual, by $\mathbb{N}$, $\mathbb{R}$, $\mathbb{C}$ we denote the sets of positive integers, real numbers and complex numbers respectively. In the following, the symbols
$\mathbb{D}$, $\overline{\mathbb{D}}$, $\partial \mathbb{D}$ denote the open unit disk, closed unit disk and the unit circle on the complex plane respectively.\smallskip

The purpose of this note is to explore the ``local" dynamics of certain operators on $l^{\infty}(\mathbb{N})$, where $l^{\infty}(\mathbb{N})$ denotes the Banach space of
bounded sequences of complex numbers endowed with the usual supremum norm. We will frequently use the subspace $c_0(\mathbb{N})$ of $l^{\infty}(\mathbb{N})$, which
consists of all the null sequences of complex numbers. Let us proceed with the relevant definitions.

\begin{definition}
Let $T:X\rightarrow X$ be an operator. For every $x\in X$ the sets

\[
\begin{split}
J_T(x)=\{ &y\in X: \mbox{ there exist a strictly increasing sequence of positive}\\
&\mbox{integers}\,(k_{n})\,\mbox{and a sequence }\, (x_{n})\subset X\,\mbox{such that}\, x_{n}\rightarrow x\,
\mbox{and}\\
&T^{k_{n}}x_{n}\rightarrow y\}
\end{split}
\]
\[
\begin{split}
J_T^{mix}(x)=\{ &y\in X:\, \mbox{there exists a sequence }\,(x_{n})\subset X\,\mbox{such that}\\
&\, x_{n}\rightarrow x\, \mbox{and} \,\,T^nx_{n}\rightarrow y\}
\end{split}
\]
denote the extended (prolongational) limit set and the extended mixing limit set of $x$ under $T$ respectively.
\end{definition}

\begin{definition}
Let $X$ be a Banach space and $T:X\rightarrow X$ be an operator. Then $T$ is called \textit{topologically
transitive} if for every pair of open sets $U,V$ of $X$ there exists a positive integer $n$ such that $T^nU\cap
V\neq \emptyset $.  The operator $T$ is called \textit{topologically mixing} if for every pair of open sets
$U,V$ of $X$ there exists a positive integer $k$ such that $T^nU\cap V\neq \emptyset $ for every $n\geq k$.
\end{definition}

\begin{definition}
Let $X$ be a separable Banach space. An operator $T:X\to X$ is called \textit{hypercyclic} if there exists a
vector $x\in X$ so that its orbit under $T$, $Orb(T,x):=\{ x, Tx, T^2, \ldots \}$, is dense in $X$. Then $x$ is
called \textit{hypercyclic} for $T$.
\end{definition}

It is easy to see that if $X$ is separable, then an operator $T$ on $X$ is hypercyclic if and only if $T$ is
topologically transitive. Clearly the notion of hypercyclicity makes no sense when $X$ is non-separable.
However, one can find topologically transitive operators on non-separable Banach spaces, see for instance
\cite{MFPW}. On the other hand not every non-separable Banach space supports topologically transitive operators,
as for example $l^{\infty}(\mathbb{N})$ \cite{BeKa}. For a comprehensive treatment on hypercyclicity we refer to
the recent books \cite{BaMa}, \cite{GEPe}. \smallskip

We now move to ``localized analogues" of hypercyclicity and topological transitivity, introduced in \cite{cosma1}.
\begin{definition}
An operator $T:X\rightarrow X $ will be called \textit{$J$-class or locally topologically transitive} (\textit{$J^{mix}$-class or locally topologically mixing}) provided
there exists a non-zero vector $x\in X$ so that the extended limit set of $x$ under $T$ is the whole space, i.e. $J_T(x)=X$ (the extended mixing limit set of $x$ under
$T$ is the whole space, i.e. $J_T^{mix}(x)=X$). In this case $x$ will be called a \textit{$J$-class vector}(\textit{$J^{mix}$-class vector}) for $T$ and the set of all
$y\in X$ with $J_T(y)=X$ ($J_T^{mix}(y)=X$) is denoted by $A_T$ ($A^{mix}_T$).
\end{definition}

It is very easy to see that $T\in L(X)$ is topologically transitive if and only if $J_T(x)=X$ for every $x\in X$ and that $T$ is topologically mixing if and only if
$J_T^{mix}(x)=X$ for every $x\in X$. This observation justifies the term ``local topological transitive". For examples of operators which are locally topologically
transitive (locally topologically mixing) but not topologically transitive (topologically mixing) as well as properties of $J$-class operators we refer to \cite{cosma1},
\cite{CosMa}. For recent additional work on local topological transitivity see \cite{AzMu}, \cite{Na}.\smallskip

By $l^p(\mathbb{N})$ $1\leq p<+\infty$ we denote the space of $p$-summable sequences of complex numbers which becomes a Banach space under the usual $p$-norm. Recall
that, given a sequence of positive and bounded weights $w=(w_n)$, the operator $T_w\in L(l^p(\mathbb{N})$, $1\leq p<+\infty$ defined by $T_w(x_1,x_2,\ldots )= (w_1x_2,
w_2x_3,\ldots )$ is called a backward unilateral weighted shift. A beautiful result of Salas \cite{Salas} asserts that if $T_w: l^p(\mathbb{N})\to l^p(\mathbb{N})$ is a
backward unilateral weighted shift then $I+T_w$ is hypercyclic on $l^p(\mathbb{N})$, $1\leq p<+\infty$; actually it is even mixing \cite{Gri}. Therefore, it is natural
to seek a ``localized analogue" of Salas result, if it exists, in the setting of weighted shifts acting on $l^{\infty}(\mathbb{N})$. Of course, when we say ``localized
analogue" we mean that, one should replace the term ``hypercyclicity" by ``local topological transitivity", since $l^{\infty}(\mathbb{N})$ is non-separable. So, in view
of the above, the following question arises naturally.

\medskip

\noindent {\bf Question}. \textit{Let $T_w:l^{\infty}(\mathbb{N})\to l^{\infty}(\mathbb{N})$ be a backward unilateral wei\-ghted shift. Is it true that $I+T_w$ is
locally topologically transitive on $l^{\infty}(\mathbb{N})$}?
\medskip

The purpose of the present note is to give a negative answer to this question by proving the following

\begin{theorem}\label{main}
Let $B:l^{\infty}(\mathbb{N})\to l^{\infty}(\mathbb{N})$ be the unweighted backward unilateral shift, i.e.
$$B(x_1,x_2,\ldots ):= (x_2, x_3,\ldots ), \,\,\, \textrm{for}\,\,\, x=(x_1,x_2,\ldots )\in l^{\infty}(\mathbb{N}).$$
\begin{enumerate}
\item[(i)] If $| \lambda| \leq 2$ the set $J_{I+\lambda B}(x)$ has empty interior for every $x\in X$ and, in
particular, the operator $I+\lambda B$ is not locally topologically transitive.

\item[(ii)] If $| \lambda| >2$ then $I+\lambda B$ is locally topologically mixing and $A^{mix}_{I+\lambda B}=
c_0(\mathbb{N})$.
\end{enumerate}
\end{theorem}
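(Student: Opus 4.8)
The plan is to handle (i) by producing suitable eigenvectors of $(I+\lambda B)^*$ on $\bigl(l^{\infty}(\mathbb{N})\bigr)^*$, and (ii) by an explicit ``shift‑and‑invert'' identity together with a description of $\ker(I+\lambda B)^n$. Write $T=I+\lambda B$ throughout, and let $F$ denote the forward (right) shift on $l^{\infty}(\mathbb{N})$.

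For (i), for every $\mu\in\partial\mathbb{D}$ the functional $\psi_\mu(z):=L\bigl((\bar\mu^{\,j-1}z_j)_j\bigr)$, with $L$ a Banach limit, is a norm‑one element of $\bigl(l^{\infty}(\mathbb{N})\bigr)^*$ satisfying $T^*\psi_\mu=(1+\lambda\mu)\psi_\mu$. Since $\min_{|\mu|=1}|1+\lambda\mu|=\bigl|\,1-|\lambda|\,\bigr|$, which is $\le 1$ precisely when $|\lambda|\le 2$, I fix $\mu_0\in\partial\mathbb{D}$ attaining the minimum, so $c:=1+\lambda\mu_0$ has $|c|\le 1$. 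If $y\in J_T(x)$, witnessed by $x_n\to x$ and $T^{k_n}x_n\to y$, then $\psi_{\mu_0}(y)=\lim c^{k_n}\psi_{\mu_0}(x_n)$; since $|\psi_{\mu_0}(x_n)|\to|\psi_{\mu_0}(x)|$ and $|c|\le 1$, this forces $|\psi_{\mu_0}(y)|\le|\psi_{\mu_0}(x)|$, with $\psi_{\mu_0}(y)=0$ when $|c|<1$, and passing to a subsequence along which $c^{k_n}$ converges even gives $|\psi_{\mu_0}(y)|=|\psi_{\mu_0}(x)|$ when $|c|=1$. Hence $J_T(x)$ is contained in the $\psi_{\mu_0}$‑preimage of a single circle (a hyperplane when $\psi_{\mu_0}(x)=0$); as $\psi_{\mu_0}\ne 0$ is an open map onto $\mathbb{C}$, that set is nowhere dense, so $J_T(x)$ has empty interior and $T$ is not locally topologically transitive. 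This part is essentially complete; the only thing to check carefully is that the $\psi_\mu$ are genuine eigenvectors of $T^*$, which is where shift‑invariance of $L$ is used.

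For (ii) the engine is $B^kF^n=F^{\,n-k}$ for $0\le k\le n$, which gives $T^nF^n=\lambda^n(I+\lambda^{-1}F)^n$. Because $\|\lambda^{-1}F\|=|\lambda|^{-1}<1$, the operator $I+\lambda^{-1}F$ is invertible with $\|(I+\lambda^{-1}F)^{-1}\|\le|\lambda|/(|\lambda|-1)$; hence for any $w\in l^{\infty}(\mathbb{N})$ the vector $S_n w:=\lambda^{-n}F^n(I+\lambda^{-1}F)^{-n}w$ satisfies $T^nS_n w=w$ and $\|S_n w\|\le(|\lambda|-1)^{-n}\|w\|$ — and the hypothesis $|\lambda|>2$ is exactly what makes $(|\lambda|-1)^{-n}\to 0$. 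From this, $J^{mix}_T(0)=l^{\infty}(\mathbb{N})$ is immediate: for any $y$, $S_n y\to 0$ while $T^nS_n y=y\to y$. For the inclusion $A^{mix}_T\subseteq c_0(\mathbb{N})$, suppose $y\notin c_0(\mathbb{N})$, so $d:=\operatorname{dist}\bigl(y,c_0(\mathbb{N})\bigr)>0$, and let $z\in J^{mix}_T(y)$ be witnessed by $x_n\to y$, $T^nx_n\to z$; putting $z_n:=T^nx_n$ (a bounded sequence) we get $x_n-S_n z_n\in\ker T^n$, and since $\ker T^n=\ker(I+\lambda B)^n$ consists of polynomials in $j$ times $(-1/\lambda)^{\,j}$, it lies in $c_0(\mathbb{N})$; therefore $\operatorname{dist}\bigl(x_n,c_0(\mathbb{N})\bigr)\le\|S_n z_n\|\to 0$, contradicting $\operatorname{dist}\bigl(x_n,c_0(\mathbb{N})\bigr)\to d>0$. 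Hence $J^{mix}_T(y)=\emptyset$.

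Finally, $c_0(\mathbb{N})\subseteq A^{mix}_T$: the set $A^{mix}_T$ is a closed linear subspace (closure follows from a standard diagonal argument, linearity from adding and rescaling witnessing sequences), and it contains every eigenvector $v_\mu=(\mu^{\,j-1})_j$ with $|\mu|<1$ and $|1+\lambda\mu|<1$, because for such $\mu$ the constant sequence $x_n\equiv v_\mu$ gives $T^nv_\mu=(1+\lambda\mu)^nv_\mu\to 0$, i.e. $0\in J^{mix}_T(v_\mu)$, whence $J^{mix}_T(v_\mu)\supseteq J^{mix}_T(0)=l^{\infty}(\mathbb{N})$. Since $\{\mu:|\mu|<1,\ |1+\lambda\mu|<1\}$ is a nonempty open subset of $\mathbb{D}$ — it contains the disk $D(-1/\lambda,1/|\lambda|)$, which lies inside $\mathbb{D}$ precisely because $|\lambda|>2$ — these $v_\mu$ span a dense subspace of $c_0(\mathbb{N})$, so $c_0(\mathbb{N})\subseteq A^{mix}_T$. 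Combining, $A^{mix}_T=c_0(\mathbb{N})$, and since this subspace is nonzero, $T$ is locally topologically mixing. I expect the main points requiring care to be the two structural facts about $(I+\lambda B)^n$ — the precise norm bound for $S_n$ and the exact description of $\ker(I+\lambda B)^n$ — together with the density of the eigenvectors $v_\mu$ in $c_0(\mathbb{N})$; everything else is routine.
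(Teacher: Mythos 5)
Your proof is correct, and it diverges from the paper's at essentially every step, so it is worth recording the comparison. For (i) the paper invokes the general fact (Lemma \ref{denserange}) that non-empty interior of $J_T(x)$ forces $T-\mu I$ to have dense range for all $|\mu|\leq 1$, and then shows by a direct coordinate computation (Lemma \ref{lambda1}) that $I+\tfrac{\lambda}{|\lambda|}B$ fails to have dense range; your Banach-limit functionals $\psi_{\mu}$, which are genuine eigenvectors of $(I+\lambda B)^{*}$ with eigenvalue $1+\lambda\mu$, achieve the same conclusion self-containedly and give the slightly sharper information that $J_T(x)$ lies in a single level set of $|\psi_{\mu_0}|$. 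For the right inverses in (ii) the paper uses the explicit coordinate formula of Lemma \ref{preimage}, whereas your identity $T^nF^n=\lambda^n(I+\lambda^{-1}F)^n$ plus the Neumann-series bound is a coordinate-free route to the same estimate $\|S_n\|\leq(|\lambda|-1)^{-n}$. The largest divergence is in $A^{mix}_T\subseteq c_0(\mathbb{N})$: the paper passes to the quotient $l^{\infty}(\mathbb{N})/c_0(\mathbb{N})$ and derives a contradiction from the spectrum $\{1+\lambda e^{i\theta}\}$ of the induced operator missing the unit circle (Lemmas \ref{unitcirclespectrum}, \ref{inducedJmix}, \ref{spectrumquotient}); your argument --- $x_n-S_nz_n\in\ker T^n\subseteq c_0(\mathbb{N})$, hence $\mathrm{dist}(x_n,c_0(\mathbb{N}))\to 0$, contradicting $\mathrm{dist}(y,c_0(\mathbb{N}))>0$ --- is more elementary and yields the stronger conclusion that $J^{mix}_T(y)=\emptyset$ for every $y\notin c_0(\mathbb{N})$. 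For $c_0(\mathbb{N})\subseteq A^{mix}_T$ the paper quotes Grivaux's theorem that $(I+\lambda B)|_{c_0(\mathbb{N})}$ is mixing and then applies Lemma \ref{tml1}, while you use the eigenvectors $v_\mu$ with $\mu\in D(-1/\lambda,1/|\lambda|)$ together with the facts that $A^{mix}_T$ is a closed subspace (which the paper itself records at the start of Section 3.2) and that the span of the $v_\mu$ is dense in $c_0(\mathbb{N})$ by the standard $\ell^1$-duality and analyticity argument; this trades an external citation for two routine verifications. The two structural points you flag --- that $\ker(I+\lambda B)^n$ consists of the sequences $\bigl(p(j)(-1/\lambda)^j\bigr)_j$ with $\deg p<n$, hence lies in $c_0(\mathbb{N})$, and the density of the $v_\mu$ --- do hold, so the argument is complete.
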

Part (i) of Theorem \ref{main} gives a negative answer to the above question. Actually, Theorem \ref{main}
provides a characterization of $J^{mix}$-class operators of the form $I+\lambda B$, $\lambda \in \mathbb{C}$ and
in addition describes the set $A^{mix}_{I+\lambda B}$. Our paper is organized as follows. In section 2 we give
the proof of Theorem \ref{main}. In the final section, Section 3, we establish a variant of Theorem \ref{main}
and we also exhibit some information on the structure of the set $A_T^{mix}$ for certain $T\in
L(l^{\infty}(\mathbb{N}))$.

\section{Proof of the main result}

\subsection{Preparatory lemmas}
We start with a lemma which can be found in \cite[Corollary 3.4]{cosma1}.

\begin{lemma}\label{denserange}
Let $T:X\to X$ be an operator. Suppose there exists a vector $x\in X$ such that the set $J_T(x)$ has non-empty
interior. Then for every $\lambda\in\mathbb{C}$ with $|\lambda|\leq 1$ the operator $T-\lambda I$  has dense
range.
\end{lemma}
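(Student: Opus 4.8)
The plan is to argue by contrapositive: assume $T-\lambda I$ does \emph{not} have dense range for some $\lambda$ with $|\lambda|\le 1$, and deduce that $J_T(x)$ has empty interior for every $x\in X$. If $\overline{\mathrm{ran}(T-\lambda I)}\neq X$, then by the Hahn--Banach theorem there is a non-zero functional $\varphi\in X^{*}$ that annihilates $\mathrm{ran}(T-\lambda I)$, i.e. $\varphi\bigl((T-\lambda I)z\bigr)=0$ for all $z\in X$, which is the same as the eigenvalue relation $T^{*}\varphi=\lambda\varphi$. First I would record the immediate consequence $T^{*k}\varphi=\lambda^{k}\varphi$ for every $k$, so that $\varphi(T^{k}z)=\lambda^{k}\varphi(z)$ for all $z\in X$ and all $k\in\mathbb N$.

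Next I would use this to control the limit set. Fix $x\in X$ and let $y\in J_T(x)$; choose sequences $x_{n}\to x$ and exponents $k_{n}\to\infty$ with $T^{k_{n}}x_{n}\to y$. Applying $\varphi$ and using continuity together with the relation above,
\[
\varphi(y)=\lim_{n} \varphi\bigl(T^{k_{n}}x_{n}\bigr)=\lim_{n}\lambda^{k_{n}}\varphi(x_{n}).
\]
Since $|\lambda|\le 1$ the scalars $\lambda^{k_{n}}$ are bounded by $1$, and $\varphi(x_{n})\to\varphi(x)$ is a convergent (hence bounded) sequence, so $|\varphi(y)|\le \|\varphi\|\cdot\sup_{n}|\varphi(x_{n})| =: M<\infty$. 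In other words $J_T(x)$ is contained in the set $\{y\in X:\ |\varphi(y)|\le M\}$, a closed ``slab'' determined by the non-zero continuous functional $\varphi$. Such a slab has empty interior (it contains no open ball, since $\varphi$ is unbounded on every ball), and therefore $J_T(x)$ has empty interior as well. As $x\in X$ was arbitrary, this contradicts the hypothesis that $J_T(x)$ has non-empty interior for some $x$, completing the contrapositive.

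I do not anticipate a genuine obstacle here; the only point requiring a little care is the case $|\lambda|<1$ versus $|\lambda|=1$, but both are handled uniformly by the single bound $|\lambda^{k_{n}}|\le 1$, so no separate treatment is needed. One should also note that $\varphi\neq 0$ is what makes the slab proper (for $|\lambda|=1$ one cannot conclude $\varphi(y)=0$, only that it is bounded, but boundedness of a non-trivial linear image is already incompatible with an open set), and that the whole argument is exactly the mechanism by which a point spectrum obstruction for $T^{*}$ forces the extended limit sets to be ``thin''.
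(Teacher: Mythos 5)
The paper does not prove this lemma itself---it is quoted from \cite[Corollary 3.4]{cosma1}---so I can only compare your argument with the standard one; your opening moves (Hahn--Banach gives a non-zero $\varphi\in X^{*}$ annihilating the range, hence $T^{*}\varphi=\lambda\varphi$ and $\varphi(T^{k}z)=\lambda^{k}\varphi(z)$, and then one tests $J_T(x)$ against $\varphi$) are exactly that argument. The gap is in the last step. From $\varphi(y)=\lim_{n}\lambda^{k_{n}}\varphi(x_{n})$ you deduce only the inequality $|\varphi(y)|\le M$ and assert that $\{y:|\varphi(y)|\le M\}$ has empty interior ``since $\varphi$ is unbounded on every ball''. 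That is false: $\varphi$ is a \emph{bounded} linear functional, so it is bounded on every ball, and for $M>0$ the sublevel set $\{y:|\varphi(y)|\le M\}$ contains the open ball of radius $M/\|\varphi\|$ about the origin---it has non-empty interior. (The parenthetical claim that boundedness of a non-trivial linear image is incompatible with an open set fails for the same reason: $\varphi$ maps an open ball onto a bounded open disk in $\mathbb{C}$.) So, as written, the contrapositive does not go through.

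The fix is that your computation actually yields an equality, not just a bound. Since $(k_{n})$ is strictly increasing, $k_{n}\to\infty$. If $|\lambda|<1$ then $\lambda^{k_{n}}\to 0$ while $\varphi(x_{n})\to\varphi(x)$ is bounded, so $\varphi(y)=0$ and $J_T(x)\subseteq\ker\varphi$, a proper closed hyperplane, which has empty interior. If $|\lambda|=1$ then $|\lambda^{k_{n}}\varphi(x_{n})|=|\varphi(x_{n})|\to|\varphi(x)|$, so $|\varphi(y)|=|\varphi(x)|$ for \emph{every} $y\in J_T(x)$; thus $J_T(x)$ lies in the level set $\{y:|\varphi(y)|=|\varphi(x)|\}$, and this set does have empty interior, because $\varphi$ maps any open ball $B(y_{0},r)$ onto the open disk of radius $r\|\varphi\|>0$ centered at $\varphi(y_{0})$, which cannot be contained in a circle $\{w\in\mathbb{C}:|w|=c\}$. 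With that replacement your proof is correct and is the intended one.
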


The next lemma also appears in \cite{cosma1} and gives information on the spectrum of a $J$-class operator.
\begin{lemma} \label{unitcirclespectrum}
Let $X$ be a Banach space and $T\in L(X)$. If $T$ is $J$-class then the spectrum of $T$, $\sigma (T)$,
intersects the unit circle $\partial\mathbb{D}$.
\end{lemma}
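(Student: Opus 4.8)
The plan is to argue by contradiction via the Riesz decomposition of the spectrum. Assume $\sigma(T)\cap\partial\mathbb{D}=\emptyset$. Then $\sigma(T)$ is the disjoint union of the two compact sets $\sigma_-:=\sigma(T)\cap\mathbb{D}$ and $\sigma_+:=\sigma(T)\setminus\overline{\mathbb{D}}$, which are separated by the unit circle. By the holomorphic functional calculus (the Riesz projections associated with this partition of the spectrum), there is a topological direct sum decomposition $X=X_-\oplus X_+$ into closed $T$-invariant subspaces with $\sigma(T|_{X_-})=\sigma_-$ and $\sigma(T|_{X_+})=\sigma_+$; let $P_\pm\in L(X)$ be the corresponding projections, which are bounded and commute with $T$. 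Since $r(T|_{X_-})<1$ (read vacuously when $X_-=\{0\}$), Gelfand's spectral radius formula gives $\|(T|_{X_-})^n\|\to 0$; and since $0\notin\sigma_+$ the operator $T|_{X_+}$ is invertible with $r\big((T|_{X_+})^{-1}\big)=\big(\min_{z\in\sigma_+}|z|\big)^{-1}<1$, so $\|(T|_{X_+})^{-n}\|\to 0$ (again vacuously when $X_+=\{0\}$).

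Now suppose, contrary to the conclusion, that $x\neq 0$ is a $J$-class vector, i.e. $J_T(x)=X$. I would split into two cases according to the component $P_+x$. If $P_+x\neq 0$: choose any $y\in J_T(x)$ together with $z_n\to x$ and $k_n\to\infty$ with $T^{k_n}z_n\to y$. Applying $P_+$ gives $P_+z_n\to P_+x$ and $(T|_{X_+})^{k_n}(P_+z_n)=T^{k_n}(P_+z_n)\to P_+y$; hence $P_+z_n=(T|_{X_+})^{-k_n}\big((T|_{X_+})^{k_n}(P_+z_n)\big)\to 0$, because the sequence $\big((T|_{X_+})^{k_n}(P_+z_n)\big)$ is bounded while $\|(T|_{X_+})^{-k_n}\|\to 0$. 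This forces $P_+x=0$, a contradiction. If instead $P_+x=0$, then $x\in X_-\setminus\{0\}$, so $X_-\neq\{0\}$ and therefore $X_+\subsetneq X$; for any $y\in J_T(x)$ with witnessing sequences $z_n\to x$, $T^{k_n}z_n\to y$, applying $P_-$ gives $\|T^{k_n}(P_-z_n)\|=\|(T|_{X_-})^{k_n}(P_-z_n)\|\le\|(T|_{X_-})^{k_n}\|\,\|P_-z_n\|\to 0$ since $(\|P_-z_n\|)$ is bounded, so $P_-y=0$ and $y\in X_+$. Thus $J_T(x)\subseteq X_+\subsetneq X$, again contradicting $J_T(x)=X$. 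Since both cases are impossible, $\sigma(T)\cap\partial\mathbb{D}\neq\emptyset$.

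The conceptual core — and the place where a little care is needed — is the observation that the Riesz splitting decomposes the behaviour of approximate orbits into a contracting part $X_-$, on which $T^{k_n}(P_-z_n)\to 0$ whenever the data $z_n$ stay bounded, and an expanding part $X_+$, on which a sequence of iterates $T^{k_n}(P_+z_n)$ can converge only if $P_+z_n\to 0$, so that no vector with nonzero $X_+$-component can have a nonempty extended limit set. The remaining points are routine: the projections $P_\pm$ are bounded and commute with $T$, hence pass through the limits in the definition of $J_T(x)$; and one of the summands $X_\pm$ may be trivial, which is harmless once the spectral radius estimates are read vacuously in that situation. (Note that $x\neq 0$ always lands in exactly one of the two cases, so the dichotomy is exhaustive.)
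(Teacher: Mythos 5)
Your proof is correct. Note that the paper does not actually prove this lemma --- it is quoted from \cite{cosma1} --- so there is no in-text argument to compare against; but your Riesz-decomposition argument is precisely the standard one for results of this type (it is the $J$-class adaptation of Kitai's spectral argument for hypercyclic operators, and is essentially how the result is established in the cited reference). All the delicate points are handled: the projections $P_\pm$ commute with $T$ and pass through the limits defining $J_T(x)$, the strict monotonicity of $(k_n)$ gives $k_n\to\infty$ so that $\|(T|_{X_-})^{k_n}\|\to 0$ and $\|(T|_{X_+})^{-k_n}\|\to 0$, and the two cases $P_+x\neq 0$ (which forces $J_T(x)=\emptyset$) and $P_+x=0$ (which traps $J_T(x)$ inside the proper subspace $X_+$) exhaust the possibilities for $x\neq 0$.
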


\begin{lemma} \label{tml1}
Let $T$ be an operator on a Banach space $X$. If $0\in J_{T}^{mix}(x)$ for some $x\in X$ then
$J_{T}^{mix}(x)=J_{T}^{mix}(0)$.
\end{lemma}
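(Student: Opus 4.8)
The plan is to exploit the linearity of $T$ together with the continuity of vector addition in $X$; the statement should follow from a direct ``subtraction/addition trick'' applied to witnessing sequences, so I do not expect any genuine obstacle. The one point that requires a little care is that the hypothesis $0\in J_{T}^{mix}(x)$ supplies a \emph{single fixed} sequence $(u_n)\subset X$ with $u_n\to x$ and $T^{n}u_n\to 0$, and this same sequence $(u_n)$ will be used in both inclusions.

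For the inclusion $J_{T}^{mix}(x)\subseteq J_{T}^{mix}(0)$, I would take an arbitrary $y\in J_{T}^{mix}(x)$ and a witnessing sequence $(v_n)\subset X$ with $v_n\to x$ and $T^{n}v_n\to y$, and then set $w_n:=v_n-u_n$. Since $w_n\to x-x=0$ and $T^{n}w_n=T^{n}v_n-T^{n}u_n\to y-0=y$, we get $y\in J_{T}^{mix}(0)$.

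For the reverse inclusion $J_{T}^{mix}(0)\subseteq J_{T}^{mix}(x)$, I would take $y\in J_{T}^{mix}(0)$ with a witnessing sequence $(w_n)\subset X$ satisfying $w_n\to 0$ and $T^{n}w_n\to y$, and set $v_n:=w_n+u_n$. Then $v_n\to 0+x=x$ and $T^{n}v_n=T^{n}w_n+T^{n}u_n\to y+0=y$, so $y\in J_{T}^{mix}(x)$. Combining the two inclusions yields $J_{T}^{mix}(x)=J_{T}^{mix}(0)$. It is worth noting that the argument goes through cleanly precisely because the mixing limit set is defined with the full sequence of iterates $(T^{n})$, so the sequences $(v_n)$, $(u_n)$, $(w_n)$ can be combined termwise without having to reconcile different subsequences $(k_n)$; this is why the analogous statement for $J_{T}(\cdot)$ would need an extra remark about passing to a common subsequence.
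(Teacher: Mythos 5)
Your proof is correct and follows essentially the same route as the paper: fix one witnessing sequence for $0\in J_T^{mix}(x)$ and add it to, or subtract it from, the witnessing sequence for the other inclusion, using linearity of $T^n$ and continuity of addition. The paper's argument is identical up to notation, so no further comment is needed.
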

\begin{proof}
Assume that $0\in J_{T}^{mix}(x)$. Then there exists a sequence $(x_n)\subset X$ such that $x_{n}\rightarrow x$
and $T^nx_{n}\rightarrow 0$. Let us show first that $J_{T}^{mix}(0)\subset J_{T}^{mix}(x)$. Indeed, take $y\in
J_{T}^{mix}(0)$. There exists a sequence $(y_n)\subset X$ such that $y_{n}\rightarrow 0$ and
$T^ny_{n}\rightarrow y$. Hence $T^n(x_n+y_n)\to y$ and $x_n+y_n\to x$. Therefore $y\in J_{T}^{mix}(x)$. For the
converse inclusion let $y\in J_{T}^{mix}(x)$. There exists a sequence $(v_n)\subset X$ such that
$v_{n}\rightarrow x$ and $T^nv_{n}\rightarrow y$. Hence $T^n(v_n-x_n)\to y$, $v_n-x_n\to 0$ and we conclude that
$y\in J_{T}^{mix}(0)$. This completes the proof.
\end{proof}

\begin{lemma} \label{kernel}
Let $T$ be an operator on a Banach space $X$. If $J_T^{mix}(0)=X$ then $J_T^{mix}(x)=X$ for every $x\in Ker T$.
\end{lemma}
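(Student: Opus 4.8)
The plan is to reduce the statement to Lemma \ref{tml1} by showing that $0$ always lies in $J_T^{mix}(x)$ whenever $x\in\ker T$. Once we know $0\in J_T^{mix}(x)$, Lemma \ref{tml1} immediately gives $J_T^{mix}(x)=J_T^{mix}(0)$, and since by hypothesis $J_T^{mix}(0)=X$, we are done. So the entire content of the proof is the observation that $0\in J_T^{mix}(x)$ for $x\in\ker T$.

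To see this, I would exhibit the witnessing sequence explicitly. Take $x\in\ker T$, so $Tx=0$, and hence $T^{n}x = T^{n-1}(Tx) = 0$ for every $n\geq 1$. Now choose the constant sequence $x_n := x$. Then $x_n\to x$ trivially, and $T^{n}x_{n} = T^{n}x = 0 \to 0$. By the definition of $J_T^{mix}(x)$, this shows $0\in J_T^{mix}(x)$.

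Combining the two steps: for any $x\in\ker T$ we have $0\in J_T^{mix}(x)$, so Lemma \ref{tml1} yields $J_T^{mix}(x)=J_T^{mix}(0)=X$. There is essentially no obstacle here; the only thing to be careful about is to note that the constant sequence is an admissible choice in the definition of the extended mixing limit set and that $T^{n}x$ vanishes for all $n\ge 1$ (not merely for large $n$), which is immediate from $x\in\ker T$. This short argument will also make clear why $\ker T$ is the natural class of vectors for which the conclusion propagates.
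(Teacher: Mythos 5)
Your proof is correct and follows exactly the paper's argument: observe that $0\in J_T^{mix}(x)$ for $x\in\operatorname{Ker}T$ (the paper leaves the constant-sequence witness implicit, which you spell out) and then invoke Lemma \ref{tml1}.
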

\begin{proof}
Observe that $0\in J_T^{mix}(x)$ for every $x\in Ker T$ and we conclude by Lemma \ref{tml1}.
\end{proof}

In the next elementary, but very useful in what follows, lemma we calculate the primage of a vector $y=(y_n)\in
\mathbb{C}^{\mathbb{N}}$ under $I+\lambda B$, where $\lambda\in\mathbb{C}\setminus \{ 0\}$.

\begin{lemma}\label{preimage}
Let $B:\mathbb{C}^{\mathbb{N}}\to \mathbb{C}^{\mathbb{N}}$ be the unweighted backward unilateral shift and let
$x=(x_n)$, $y=(y_n)$ be vectors in $\mathbb{C}^{\mathbb{N}}$ such that $(I+\lambda B)x=y$ for some $\lambda \in
\mathbb{C}\setminus \{ 0\}$. Then,
\[
x_n=\frac{-1}{(-\lambda)^n} \sum_{k=1}^{n-1} (-\lambda)^k y_k + \frac{x_1}{(-\lambda)^{n-1}}
\]
for every $n=2,3,\ldots $.
\end{lemma}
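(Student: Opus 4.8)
The plan is to unwind the recurrence $(I+\lambda B)x = y$ coordinatewise and then solve the resulting first-order linear recursion by the standard summation-factor technique. Writing out the equation $(I+\lambda B)x = y$ in coordinates gives $x_n + \lambda x_{n+1} = y_n$ for every $n \geq 1$, equivalently $x_{n+1} = \frac{1}{\lambda}(y_n - x_n)$, which expresses $x_{n+1}$ in terms of $x_n$; since $\lambda \neq 0$ this is a genuine recursion and everything is determined once the free parameter $x_1$ is fixed. This is the substance of the lemma: there is nothing deep here, only a careful bookkeeping of signs and index shifts.

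First I would multiply the relation $x_n + \lambda x_{n+1} = y_n$ by the summation factor $(-\lambda)^{-n}$ (equivalently $(-1/\lambda)^n$) chosen so that the left-hand side telescopes. Concretely, $(-\lambda)^{-n} x_n + \lambda(-\lambda)^{-n} x_{n+1} = (-\lambda)^{-n} x_n - (-\lambda)^{-(n+1)} x_{n+1}$, so if we set $a_n := (-\lambda)^{-(n-1)} x_n$ the equation becomes $a_n - a_{n+1} = (-\lambda)^{-n} y_n$ — wait, I should double-check the exact power, but morally the point is that after the right normalization the left side is a difference of consecutive terms of an auxiliary sequence. Then I would sum this telescoping identity from index $1$ to index $n-1$: the left-hand side collapses to $a_1 - a_n$ (a difference of the endpoint terms), and the right-hand side is $\sum_{k=1}^{n-1} (-\lambda)^{-k} y_k$ up to the sign conventions. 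Solving for $a_n$ and then substituting back $x_n = (-\lambda)^{n-1} a_n$ yields the claimed closed form
\[
x_n = \frac{-1}{(-\lambda)^n} \sum_{k=1}^{n-1} (-\lambda)^k y_k + \frac{x_1}{(-\lambda)^{n-1}}.
\]

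The only thing that requires genuine care — and hence the main (very mild) obstacle — is getting the powers of $-\lambda$ and the overall sign on the sum exactly right, together with the correct range of summation so that the formula is valid starting from $n = 2$; a reliable sanity check is to verify the base case $n = 2$ directly (it should read $x_2 = \frac{-1}{(-\lambda)^2}(-\lambda)y_1 + \frac{x_1}{-\lambda} = \frac{y_1 - x_1}{\lambda}$, matching $x_2 = (y_1 - x_1)/\lambda$ from the recursion), and then to confirm that the passage from $n$ to $n+1$ in the closed formula reproduces the recursion $x_n + \lambda x_{n+1} = y_n$. Once the base case and the inductive step check out, the proof is complete; alternatively one can present it purely by induction on $n$, which sidesteps the telescoping bookkeeping entirely at the cost of a slightly less transparent derivation.
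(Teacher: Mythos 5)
Your proposal is correct, and the formula it produces (together with the $n=2$ sanity check) matches the lemma; the one loose end is the summation factor, which as written, $a_n=(-\lambda)^{-(n-1)}x_n$, does \emph{not} telescope against $x_n+\lambda x_{n+1}=y_n$ --- the correct normalization is $a_n:=(-\lambda)^{n-1}x_n$, which gives $a_n-a_{n+1}=(-\lambda)^{n-1}y_n$, and summing from $1$ to $n-1$ then yields exactly the stated closed form. You flagged this yourself, so it is a bookkeeping item rather than a gap. Your route is genuinely different in presentation from the paper's: the authors do not derive the formula at all, but instead observe that $\ker(I+\lambda B)$ consists of the vectors $w$ with $w_n=w_1/(-\lambda)^{n-1}$, write down the candidate particular solution $z$ with $z_1=0$ and $z_n=\frac{-1}{(-\lambda)^n}\sum_{k=1}^{n-1}(-\lambda)^k y_k$, verify $(I+\lambda B)z=y$ by direct substitution, and conclude that $x-z$ lies in the kernel. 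Their ``particular solution plus homogeneous solution'' decomposition buys a cleaner verification (no telescoping indices to track) and makes the kernel description explicit, which the paper reuses later (e.g.\ in the proof of Theorem~\ref{main}(ii) and Lemma~\ref{kernel}); your summation-factor derivation buys a constructive explanation of \emph{where} the formula comes from rather than a verification of a formula pulled out of thin air. Both are complete proofs of the same elementary recursion.
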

\begin{proof}
Notice that $x_n+\lambda x_{n+1}=y_n$, for every $n\in\mathbb{N}$. It is easy to show that the kernel of
$I+\lambda B$ consists of all vectors $w=(w_n)$ such that $w_n=\frac{w_1}{(-\lambda)^{n-1}}$, for every
$n\in\mathbb{N}$. Define the vector $z=(z_n)$ by $z_n:=\frac{-1}{(-\lambda)^n} \sum_{k=1}^{n-1} (-\lambda)^k
y_k$ for $n=2,3,\ldots $ and $z_1:=0$. Straightforward calculations give $(I+\lambda B)z=y$. Thus, $(I+\lambda
B)(x-z)=0$ and from the above description of the kernel of $(I+\lambda B)$ we conclude that
$x_n=\frac{-1}{(-\lambda)^n} \sum_{k=1}^{n-1} (-\lambda)^k y_k + \frac{x_1}{(-\lambda)^{n-1}}$ for every
$n=2,3,\ldots $.
\end{proof}

\begin{lemma} \label{lambda1}
Let $\lambda \in \mathbb{C}$ with $|\lambda |=1$. Then the operator $I+\lambda B:l^{\infty}(\mathbb{N})\to
l^{\infty}(\mathbb{N})$ does not have dense range.
\end{lemma}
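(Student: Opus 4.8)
The plan is to pin down the range of $I+\lambda B$ on $l^{\infty}(\mathbb{N})$ well enough to produce a vector sitting at a fixed positive distance from it. First I would feed the hypothesis into Lemma~\ref{preimage}: if $y=(y_n)$ lies in the range, say $(I+\lambda B)x=y$ with $x=(x_n)\in l^{\infty}(\mathbb{N})$, then multiplying the formula for $x_n$ by $(-\lambda)^{n-1}$ — which has modulus $1$ because $|\lambda|=1$ — yields
\[
(-\lambda)^{n-1}x_n=\frac{1}{\lambda}\sum_{k=1}^{n-1}(-\lambda)^k y_k+x_1,\qquad n\geq 2.
\]
Since $|(-\lambda)^{n-1}x_n|=|x_n|\leq\|x\|_{\infty}$, the ``twisted'' partial sums $\sigma_n:=\sum_{k=1}^{n}(-\lambda)^k y_k$ form a bounded sequence; put $M:=\sup_n|\sigma_n|<\infty$. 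So boundedness of $(\sigma_n)$ is a necessary condition for $y$ to be in the range.

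Next I would take the test vector $y_0=\big((-1/\lambda)^k\big)_{k\geq 1}\in l^{\infty}(\mathbb{N})$, which has $\|y_0\|_{\infty}=1$ and satisfies $(-\lambda)^k(y_0)_k=1$ for every $k$, so that $\sum_{k=1}^{n}(-\lambda)^k(y_0)_k=n$. Then for every $y$ in the range,
\[
n-\sigma_n=\sum_{k=1}^{n}(-\lambda)^k\big((y_0)_k-y_k\big),
\]
and since each $|(-\lambda)^k|=1$ the right-hand side has modulus at most $n\,\|y_0-y\|_{\infty}$. Hence $n-M\leq|n-\sigma_n|\leq n\,\|y_0-y\|_{\infty}$, i.e. $\|y_0-y\|_{\infty}\geq 1-M/n$ for all $n$, and letting $n\to\infty$ gives $\|y_0-y\|_{\infty}\geq 1$. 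Thus every element of the range is at distance at least $1$ from $y_0$, so $y_0$ is not in the closure of the range and $I+\lambda B$ does not have dense range.

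There is no serious obstacle here: the computation is short once the right objects are in place. The only real idea is to extract from Lemma~\ref{preimage} the fact that membership in the range forces the twisted partial sums of $y$ to stay bounded, and then to choose $y_0$ so that those sums grow \emph{linearly} while $\|y_0\|_{\infty}$ stays bounded — the linear growth is exactly what lets the distance estimate survive division by $n$. It is worth noting that this $y_0$ is not in $c_0(\mathbb{N})$; indeed $I+\lambda B$ does have dense range on $c_0(\mathbb{N})$ (its range there contains all finitely supported sequences), so this phenomenon genuinely uses the ambient space $l^{\infty}(\mathbb{N})$.
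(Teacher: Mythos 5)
Your proof is correct and is essentially the paper's argument: the same test vector $y_0=((-\lambda)^{-k})_k$, the same use of Lemma~\ref{preimage} to show that membership in the range forces the twisted partial sums $\sum_{k=1}^{n}(-\lambda)^k y_k$ to stay bounded, and the same exploitation of their linear growth for $y_0$. You package it as a clean distance estimate (getting distance $1$ rather than the paper's ball of radius $1/2$), but the mechanism is identical.
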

\begin{proof}
Consider the vector $y=(y_n)\in l^{\infty}(\mathbb{N})$ with $y_n=(-\lambda)^{-n}$,  for every $n\in\mathbb{N}$.
We will show that the open ball $B(y,1/2)$ centered at $y$ with radius $\frac{1}{2}$ does not intersect the
range of $I+\lambda B$. Assume the contrary, i.e. there is a vector $w=(w_n)\in B(y,1/2)$ and a vector $x\in
l^{\infty}(\mathbb{N})$ such that $(I+\lambda B)x=w$. By Lemma \ref{preimage} we deduce that

\begin{equation} \label{eq2}
x_n=\frac{-1}{(-\lambda)^n} \sum_{k=1}^{n-1} (-\lambda)^k w_k + \frac{x_1}{(-\lambda)^{n-1}},
\end{equation}
for every $n=2,3,\ldots $. Since $w\in B(y,1/2)$ we have $w_k=y_k+\varepsilon_k$ for some $\varepsilon_k \in
\mathbb{C}$ with $|\varepsilon_k |< \frac{1}{2}$, for every $k\in\mathbb{N}$. Substituting $w_k$ in (\ref{eq2})
we get

\[
\left | \sum_{k=1}^{n-1} (-\lambda)^k (y_k+\varepsilon_k) - \lambda x_1  \right | \leq  \| x\|_{\infty}
\]
for every $n\in\mathbb{N}$, which in turn implies
\[
\left | n-1 + \sum_{k=1}^{n-1} (-\lambda)^k \varepsilon_k   \right | \leq \| x\|_{\infty} + |x_1|\leq 2\, \|
x\|_{\infty},
\]
since $y_k=(-\lambda)^{-k}$. Now, the estimate $|\sum_{k=1}^{n-1} (-\lambda)^k \varepsilon_k | \leq
\frac{n-1}{2}$ and the triangle inequality yield
\[
\frac{n-1}{2}=n-1 -\frac{(n-1)}{2} \leq \left | n-1 + \sum_{k=1}^{n-1} (-\lambda)^k \varepsilon_k \right | \leq
2\, \| x\|_{\infty}
\]
for every $n\in\mathbb{N}$, which is a contradiction.
\end{proof}

For the next two lemmas we need to introduce some terminology and notation. Let $X$ be a Banach space and let
$T:X\to X$ be a bounded linear operator. For a closed and $T$-invariant subspace $M$ of $X$ we define the
induced operator
$$\widehat{T}:X/M\to X/M \,\,\, \textrm{by}\,\,\, \widehat{T}[x]_M:=[Tx]_M.$$ $\hat{T}$ is well defined, linear and
continuous in the induced quotient topology. In addition we have $\| \widehat{T}\| \leq \| T\|$. In the
following we will just write $[x]$ instead of $[x]_M$.

\begin{lemma} \label{inducedJmix}
Let $X$ be a Banach space and $T\in L(X)$. Assume that $T$ is $J^{mix}$-class and let $M$ be a closed and
$T$-invariant subspace of $X$ such that $M\subset A_T^{mix}$ and $A_T^{mix}\setminus M\neq \emptyset$. Then the
induced operator $\widehat{T}:X/M\to X/M$ is $J^{mix}$-class.
\end{lemma}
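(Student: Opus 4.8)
The plan is to show that the quotient projection carries $J^{mix}$-class witnesses to $J^{mix}$-class witnesses. Concretely, pick any $x_0 \in A_T^{mix}\setminus M$, which exists by hypothesis, and let $\pi: X \to X/M$ be the canonical quotient map. The natural candidate for a $J^{mix}$-vector of $\widehat{T}$ is $[x_0] = \pi(x_0)$, which is non-zero precisely because $x_0 \notin M$. So the goal reduces to proving $J_{\widehat{T}}^{mix}([x_0]) = X/M$.

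The argument splits into the two inclusions. The inclusion $J_{\widehat{T}}^{mix}([x_0]) \subseteq X/M$ is trivial. For the reverse inclusion, fix an arbitrary $[y] \in X/M$ with representative $y \in X$. Since $T$ is $J^{mix}$-class with $x_0 \in A_T^{mix}$, we have $J_T^{mix}(x_0) = X$, so in particular $y \in J_T^{mix}(x_0)$: there is a sequence $(u_n) \subset X$ with $u_n \to x_0$ and $T^n u_n \to y$. Applying the quotient map, which is continuous with $\|\pi\| \le 1$, and using $\widehat{T}^n[u_n] = [T^n u_n] = \pi(T^n u_n)$, we get $[u_n] \to [x_0]$ and $\widehat{T}^n[u_n] \to [y]$ in $X/M$. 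Hence $[y] \in J_{\widehat{T}}^{mix}([x_0])$, and since $[y]$ was arbitrary we conclude $J_{\widehat{T}}^{mix}([x_0]) = X/M$. Thus $\widehat{T}$ is $J^{mix}$-class, witnessed by the non-zero vector $[x_0]$.

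I do not expect a genuine obstacle here: the statement is essentially the observation that $J^{mix}$ limit sets push forward under continuous operator-equivariant maps, and the quotient map $\pi$ is exactly such a map with $\pi \circ T = \widehat{T} \circ \pi$. The only point that needs care is confirming $[x_0] \ne 0$, which is the whole reason the hypothesis $A_T^{mix}\setminus M \neq \emptyset$ is imposed; the hypothesis $M \subset A_T^{mix}$ is not actually needed for this lemma but is presumably there for how the lemma gets applied later (e.g. to iterate the construction). One could alternatively phrase the forward-image property as a standalone remark, but inlining it keeps the proof self-contained.
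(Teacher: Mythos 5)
Your proof is correct and follows essentially the same route as the paper: pick $x_0\in A_T^{mix}\setminus M$, note $[x_0]\neq 0$, and push a witnessing sequence for any $y\in X=J_T^{mix}(x_0)$ through the quotient map. Your side remark that $M\subset A_T^{mix}$ is not actually used in the argument is also accurate.
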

\begin{proof}
Let $x\in A_T^{mix}\setminus M$. Then $[x]\neq 0$. Consider any $[y]\in X/M$. There exists a sequence $(x_n)$ in
$X$ such that $x_n\to x$ and $T^nx_n\to y$. From the last we conclude that $[x_n]\to [x]$ and
$\widehat{T}^n[x_n]=[T^nx_n]\to [y]$. This shows that $\widehat{T}$ is $J^{mix}$-class.
\end{proof}

\begin{lemma} \label{spectrumquotient}
Let $B$ be the backward shift on $l^{\infty}(\mathbb{N})$ and consider the induced operator
$\hat{B}:l^{\infty}(\mathbb{N})/c_0(\mathbb{N}) \to l^{\infty}(\mathbb{N})/c_0(\mathbb{N})$. Then $\sigma
(\hat{B})=\partial{\mathbb{D}}$.
\end{lemma}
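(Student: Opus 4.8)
The plan is to determine $\sigma(\hat B)$ by combining two facts: first, that $\hat B$ is an \emph{invertible} operator on the Banach space $l^{\infty}(\mathbb{N})/c_0(\mathbb{N})$ with $\|\hat B\|\le 1$ and $\|\hat B^{-1}\|\le 1$, which forces $\sigma(\hat B)\subseteq\partial\mathbb{D}$; and second, that $\sigma(\hat B)$ is invariant under multiplication by every unimodular scalar, which (together with the non-emptiness of the spectrum) forces $\sigma(\hat B)=\partial\mathbb{D}$.

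For the first point I would check invertibility of $\hat B$ directly. Since the backward shift $B$ maps $l^{\infty}(\mathbb{N})$ onto itself, $\hat B$ is onto the quotient; and since $Bx=(x_2,x_3,\dots)$ lies in $c_0(\mathbb{N})$ precisely when $x\in c_0(\mathbb{N})$, the map $\hat B$ is injective. Thus $\hat B$ is a bounded bijection of $l^{\infty}(\mathbb{N})/c_0(\mathbb{N})$, and the open mapping theorem gives $\hat B^{-1}\in L(l^{\infty}(\mathbb{N})/c_0(\mathbb{N}))$. To estimate $\|\hat B^{-1}\|$ I would bring in the forward shift $F(x_1,x_2,\dots)=(0,x_1,x_2,\dots)$: since $BF=I$ and $F(c_0(\mathbb{N}))\subseteq c_0(\mathbb{N})$, we get $\hat B\hat F=\widehat{BF}=\hat I$, hence $\hat B^{-1}=\hat F$ and $\|\hat B^{-1}\|=\|\hat F\|\le\|F\|=1$; likewise $\|\hat B\|\le\|B\|=1$. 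Consequently $\sigma(\hat B)\subseteq\overline{\mathbb{D}}$ and $\sigma(\hat B^{-1})=\{\mu^{-1}:\mu\in\sigma(\hat B)\}\subseteq\overline{\mathbb{D}}$, and these two inclusions together yield $\sigma(\hat B)\subseteq\partial\mathbb{D}$.

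For the second point I would exploit the rotational symmetry of $B$. Fix $\alpha$ with $|\alpha|=1$ and let $D_\alpha(x_n)=(\alpha^n x_n)$; this is a surjective linear isometry of $l^{\infty}(\mathbb{N})$ leaving $c_0(\mathbb{N})$ invariant, and a one-line computation gives $D_\alpha B D_\alpha^{-1}=\alpha^{-1}B$. Passing to the quotient, $\hat D_\alpha$ is invertible and $\hat D_\alpha\hat B\hat D_\alpha^{-1}=\alpha^{-1}\hat B$, so $\sigma(\hat B)=\sigma(\alpha^{-1}\hat B)=\alpha^{-1}\sigma(\hat B)$. Letting $\alpha$ range over $\partial\mathbb{D}$ shows $\sigma(\hat B)$ is invariant under multiplication by every element of $\partial\mathbb{D}$; since the spectrum of a bounded operator on the nonzero complex Banach space $l^{\infty}(\mathbb{N})/c_0(\mathbb{N})$ is non-empty, picking any $\mu\in\sigma(\hat B)$ gives $\partial\mathbb{D}=\mu\,\partial\mathbb{D}\subseteq\sigma(\hat B)$, and with $\sigma(\hat B)\subseteq\partial\mathbb{D}$ this proves $\sigma(\hat B)=\partial\mathbb{D}$.

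I do not anticipate a real obstacle; the only steps needing a little care are identifying $\hat B^{-1}$ with $\hat F$ (so as to control $\|\hat B^{-1}\|$, not merely to know it exists) and the bookkeeping in $D_\alpha B D_\alpha^{-1}=\alpha^{-1}B$. As an alternative route to the inclusion $\partial\mathbb{D}\subseteq\sigma(\hat B)$ one could instead show that $\hat B-\mu I$ fails to have dense range for each $\mu\in\partial\mathbb{D}$ by adapting the estimate in the proof of Lemma \ref{lambda1} — using that $\frac1n\sum_{k=1}^n|c_k|\to 0$ for $c\in c_0(\mathbb{N})$, so that perturbing by an element of $c_0(\mathbb{N})$ does not destroy the contradiction there — but the symmetry argument above is cleaner and self-contained.
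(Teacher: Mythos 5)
Your proof is correct, but it follows a genuinely different route from the paper's. The paper obtains $\partial\mathbb{D}\subseteq\sigma(\hat B)$ by exhibiting, for each $\mu=e^{i\theta}$, an explicit eigenvector $[(e^{i\theta(n-1)})_n]$ of $\hat B$, and then excludes the open disk by showing directly that $\hat B-\mu I$ is injective (a $\limsup$ estimate on $|x_{n+1}-\mu x_n|\to 0$) and surjective (an explicit series formula for the preimage) for $|\mu|<1$, finishing with $\|\hat B\|\le 1$. You instead trap the spectrum in the circle by observing that $\hat B$ is invertible with $\hat B^{-1}=\hat F$ ($F$ the forward shift, using $BF=I$ and $\ker$-considerations modulo $c_0(\mathbb{N})$), so that $\|\hat B\|\le 1$ and $\|\hat B^{-1}\|\le 1$ force $\sigma(\hat B)\subseteq\partial\mathbb{D}$, and then fill the circle by the rotational symmetry $D_\alpha BD_\alpha^{-1}=\alpha^{-1}B$ together with non-emptiness of the spectrum. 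All the steps check out: the identity $\hat B\hat F=\hat I$ plus bijectivity of $\hat B$ does identify $\hat F$ as the two-sided inverse, the similarity passes to the quotient because $D_\alpha$ and $D_\alpha^{-1}$ preserve $c_0(\mathbb{N})$, and rotation-invariance of a non-empty subset of $\partial\mathbb{D}$ gives the whole circle. The trade-off is that your argument is softer but cleaner: it avoids both the injectivity estimate and the explicit surjectivity formula, whereas the paper's computation yields the stronger fact that every point of $\partial\mathbb{D}$ is actually an eigenvalue of $\hat B$ — information your symmetry argument does not recover, though it is not needed for the application in Theorem \ref{main}.
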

\begin{proof}
Choose any $\mu:=e^{i\theta} \in \partial{\mathbb{D}}$, $\theta \in \mathbb{R}$. It is straightforward to check
that the vector $[(x_n)] \in l^{\infty}(\mathbb{N})/c_0(\mathbb{N})$ with $x_{n}=e^{i\theta (n-1)}$ for
$n=1,2,\ldots$ is an eigenvector for $\hat{B}$ corresponding to $\mu$. Hence, $\partial{\mathbb{D}} \subset
\sigma (\hat{B})$. It remains to show the converse inclusion. To this end, let $\mu \in \mathbb{C}$ with $|\mu
|<1$. Then $\hat{B}-\mu I$ is injective. Indeed, if not, there exists $x=(x_n)\notin c_0(\mathbb{N})$ such that
$(\hat{B}-\mu I)[x]=0$ and therefore $(x_{n+1}-\mu x_n)\in c_0(\mathbb{N})$. Set $\delta:=\limsup_n|x_n|$ and
let $(n_k)$ be a strictly increasing sequence of positive integers such that $| x_{n_k+1}|\to \delta$. Defining
$\epsilon_n:=x_{n+1}-\mu x_n$, then $| x_{n_k+1}|\leq |\mu| |x_{n_k}|+|\epsilon_{n_k}|$ and we deduce that
$$\delta=\lim_{k\to+\infty}|x_{n_k+1}|\leq |\mu| \limsup_k|x_{n_k}|.$$ Since $|\mu |<1$ and $\delta >0$, the
above inequality gives a contradiction. It is now easy to show that $\hat{B}-\mu I$ is surjective. Let $[y]\in l^{\infty}(\mathbb{N})/c_0(\mathbb{N})$ with $y=(y_n)$.
Then we have $(\hat{B}-\mu I)[x]=[y]$, where $x=(x_n)$ belongs to $l^{\infty}(\mathbb{N})$ and it is defined by $x_1=0$ and $x_{n+1}:= \sum_{k=0}^{n-1}y_{n-k}\mu^k$,
$n=1,2,\ldots $. Therefore $\hat{B}-\mu I$ is invertible and since $\| \hat{B}\| \leq \| B\| =1$ the conclusion follows.
\end{proof}

\subsection{Proof of Theorem \ref{main}.}
If $\lambda =0$ then, clearly, the $J$-sets of the identity operator are singletons. So, from now on, we may
assume that $\lambda\neq 0$.

\medskip

\noindent (i) Let $0<| \lambda| \leq 2$. We can rewrite the previous inequality as $||\lambda|-1|\leq 1$. Assume
that $J(x)$ has non-empty interior for some $x\in X$. Then, by Lemma \ref{denserange}, the operator $(I+\lambda
B) -(1-|\lambda|)I$ has dense range. Writing $(I+\lambda B) -(1-|\lambda|)I =|\lambda|(I+\frac{\lambda}{|\lambda
|}B)$ we conclude that the operator $I+\frac{\lambda}{|\lambda |}B$ has dense range, which contradicts Lemma
\ref{lambda1}.

\medskip

\noindent (ii) Let $\lambda\in\mathbb{C}$ with $|\lambda|>2$. Fix a vector $y=(y_n)\in l^{\infty}(\mathbb{N})$
and let $x=(x_n)\in \mathbb{C}^{\mathbb{N}}$ be such that $(I+\lambda B)x=y$. Then, by Lemma \ref{preimage},
\[
x_n=\frac{-1}{(-\lambda)^n} \sum_{k=1}^{n-1} (-\lambda)^k y_k + \frac{x_1}{(-\lambda)^{n-1}},
\]
for every $n=2,3,\ldots $. If we take $x_1=0$ then it readily follows that $| x_n| \leq \frac{\|
y\|_{\infty}}{|\lambda| -1}$ for every $n=2,3,\ldots$. Therefore, defining $w^{(1)}=(w^{(1)}_n)\in
\mathbb{C}^{\mathbb{N}}$ by $$w^{(1)}_1:=0,\,\,\, w^{(1)}_n:=\frac{-1}{(-\lambda)^n} \sum_{k=1}^{n-1}
(-\lambda)^k y_k + \frac{x_1}{(-\lambda)^{n-1}} \,\,\, n=2,3,\ldots$$ we have $\| w^{(1)}\|_{\infty}\leq
\frac{\| y\|_{\infty}}{|\lambda| -1}$, hence $w^{(1)}\in l^{\infty}(\mathbb{N})$, and $(I+\lambda B)w^{(1)}=y$.
If we repeat the same argument for $w^{(1)}$ in place of $y$ we find a vector $w^{(2)}\in
l^{\infty}(\mathbb{N})$ such that $(I+\lambda B)w^{(2)}=w^{(1)}$ and $\| w^{(2)}\|_{\infty}\leq \frac{\|
w^{(1)}\|_{\infty}}{|\lambda| -1}\leq \frac{\| y\|_{\infty}}{(|\lambda| -1)^2}$. Proceeding inductively, for
every positive integer $n$ we find a vector $w^{(n)}\in l^{\infty}(\mathbb{N})$ such that $(I+\lambda
B)^nw^{(n)}=y$ and $\| w^{(n)}\|_{\infty}\leq \frac{\| y\|_{\infty}}{(|\lambda| -1)^n}$. Since $|\lambda |>2$,
$w^{(n)}\to 0$. Thus, $J_{I+\lambda B}^{mix}(0)=l^{\infty}(\mathbb{N})$. Noticing that the kernel of $I+\lambda
B$ in $l^{\infty}(\mathbb{N})$ is non-trivial and taking any non-zero vector $w$ in this kernel, it follows that
$J_{I+\lambda B}^{mix}(w)=l^{\infty}(\mathbb{N})$ by Lemma \ref{kernel}. Thus, $I+\lambda B$ is locally
topologically mixing. It remains to show that $A_{I+\lambda B}^{mix}=c_0(\mathbb{N})$. Take any $x\in
c_0(\mathbb{N})$. The restricted operator $(I+\lambda B)|_{c_0(\mathbb{N})}:c_0(\mathbb{N})\to c_0(\mathbb{N})$
is topologically mixing, see \cite{Gri}. This, implies that $c_0(\mathbb{N}) \subset J_{I+\lambda B}^{mix}(x)$.
In particular, $0\in J_{I+\lambda B}^{mix}(x)$ and by Lemma \ref{tml1} we get $J_{I+\lambda
B}^{mix}(x)=J_{I+\lambda B}^{mix}(0)$. Since $J_{I+\lambda B}^{mix}(0)=l^{\infty}(\mathbb{N})$ we conclude that
$c_0(\mathbb{N}) \subset A_{I+\lambda B}^{mix}$. To show the converse inclusion we argue by contradiction, so
assume that $A_{I+\lambda B}^{mix}\setminus  c_0(\mathbb{N}) \neq \emptyset$. Now, Lemma \ref{inducedJmix}
implies that $\widehat{I+\lambda B}$ is $J^{mix}$-class and in view of Lemma \ref{unitcirclespectrum} the
spectrum of $\widehat{I+\lambda B}$ should intersect the unit circle $\partial \mathbb{D}$. However, by Lemma
\ref{spectrumquotient} and the spectral mapping theorem it follows that $\sigma ( \widehat{I+\lambda B} )=\{
1+\lambda e^{i\theta }: \theta \in \mathbb{R} \}$. Hence, $\sigma ( \widehat{I+\lambda B} )$ does not intersect
the unit circle since $|\lambda |>2$, which is a contradiction. This finishes the proof of the theorem. \qed

\section{Further results}

\subsection{A variation of Theorem \ref{main}}
The proof of the next lemma is similar to the proof of item (i) of Proposition 5.9 in \cite{cosma1} and it is
left to the interested reader.
\begin{lemma} \label{lemma1fr}
Let $T:X\rightarrow X$ be an operator acting on a Banach space $X$. Then $J_{T}^{mix}(0)=J_{T^n}^{mix}(0)$ for
every positive integer $n$.
\end{lemma}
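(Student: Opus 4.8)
The plan is to prove the two inclusions separately. The inclusion $J_{T}^{mix}(0)\subseteq J_{T^n}^{mix}(0)$ is immediate: given $y\in J_{T}^{mix}(0)$, pick a sequence $(x_m)\subset X$ with $x_m\to 0$ and $T^mx_m\to y$ and pass to the subsequence $z_m:=x_{nm}$; then $z_m\to 0$ and $(T^n)^mz_m=T^{nm}x_{nm}\to y$ (a subsequence of a convergent sequence), so $y\in J_{T^n}^{mix}(0)$.

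For the reverse inclusion $J_{T^n}^{mix}(0)\subseteq J_{T}^{mix}(0)$ I would proceed constructively. Fix $y\in J_{T^n}^{mix}(0)$ and a sequence $(z_m)\subset X$ with $z_m\to 0$ and $T^{nm}z_m\to y$. The idea is to build a witnessing sequence $(x_k)_{k\geq 1}$ for $T$ by interpolating between the exponents of the form $nm$ with the help of forward iterates of the $z_m$: for each $k\geq 1$ write $k=nm-j$ with $m=\lceil k/n\rceil$ and $0\leq j\leq n-1$, and set $x_k:=T^jz_m$. Then $T^kx_k=T^{nm-j}T^jz_m=T^{nm}z_m$, and since $m=m(k)\to\infty$ as $k\to\infty$ this forces $T^kx_k\to y$. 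Moreover $\|x_k\|=\|T^jz_m\|\leq\big(\max_{0\leq i\leq n-1}\|T^i\|\big)\|z_m\|\to 0$, because $j$ stays in the finite set $\{0,1,\dots,n-1\}$ while $z_m\to 0$. Hence $y\in J_{T}^{mix}(0)$, and the lemma follows.

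The point to get right — and what I would flag as the main, if mild, obstacle — is the choice of interpolant: one must insert $T^jz_m$ (not $z_m$) at the index $k=nm-j$, so that the accumulated action $T^kx_k$ collapses exactly onto $T^{nm}z_m$; the naive choice $x_k:=z_m$ would instead yield the spurious limit points $T^jy$ along the residue classes $k\equiv -j\pmod n$. Everything else is routine bookkeeping: that $m(k)=\lceil k/n\rceil\to\infty$ as $k\to\infty$, and that a maximum of finitely many null sequences is again a null sequence.
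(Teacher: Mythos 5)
Your proof is correct; the paper itself omits the argument (deferring to an analogous proposition in \cite{cosma1}), and your interpolation $x_k:=T^jz_m$ for $k=nm-j$ is exactly the standard way to pass from $J_{T^n}^{mix}(0)$ back to $J_{T}^{mix}(0)$, with the easy subsequence argument handling the other inclusion. One tiny quibble with a side remark only: with your convention $k=nm-j$ the naive choice $x_k:=z_m$ gives $T^kx_k=T^{nm-j}z_m$, which is simply uncontrolled rather than converging to $T^jy$ (the limit $T^jy$ would arise from the other convention $k=nm+j$); this does not affect the proof itself.
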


For $r>0$ the symbol $D(0,r)$ stands for the open disk in the complex plane with center $0$ and radius $r$.
\begin{theorem} \label{thm2}
Let $f$ be a holomorphic function on $D(0,r)$ for $r>1$ such that
\[\overline{\mathbb{D}}\subseteq f(\overline{\mathbb{D}}) .\]
Then there exists a positive number $R_0$ such that for all $R>R_0$ the operator $Rf(B):
l^{\infty}(\mathbb{N})\to l^{\infty}(\mathbb{N})$ is $J^{mix}$-class.
\end{theorem}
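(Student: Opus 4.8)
The plan is to follow the structure of the proof of Theorem~\ref{main}(ii). I would split the argument into two parts: (a) showing that $J^{mix}_{Rf(B)}(0)=l^{\infty}(\mathbb{N})$ for all sufficiently large $R$, and (b) showing that $\ker(Rf(B))\neq\{0\}$. Given (a) and (b), choose a nonzero $w\in\ker(Rf(B))$; then $0\in J^{mix}_{Rf(B)}(w)$, and since $J^{mix}_{Rf(B)}(0)=l^{\infty}(\mathbb{N})$, Lemma~\ref{kernel} gives $J^{mix}_{Rf(B)}(w)=l^{\infty}(\mathbb{N})$, so $Rf(B)$ is $J^{mix}$-class. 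Part (b) is immediate: $\overline{\mathbb{D}}\subseteq f(\overline{\mathbb{D}})$ forces $0\in f(\overline{\mathbb{D}})$, hence $f(z_{0})=0$ for some $|z_{0}|\le 1$; the vector $e_{z_{0}}:=(z_{0}^{\,n-1})_{n\ge 1}$ lies in $l^{\infty}(\mathbb{N})$, is nonzero, satisfies $Be_{z_{0}}=z_{0}e_{z_{0}}$, and therefore $Rf(B)e_{z_{0}}=Rf(z_{0})e_{z_{0}}=0$.

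The core is part (a), and the key device is a bounded right inverse $S$ of $f(B)$ on $l^{\infty}(\mathbb{N})$. If $\|S\|=M<+\infty$, take $R_{0}:=M$; then for $R>R_{0}$ and any $y\in l^{\infty}(\mathbb{N})$ the vectors $w^{(n)}:=R^{-n}S^{n}y$ satisfy $(Rf(B))^{n}w^{(n)}=y$ (because $f(B)S=I$ gives $f(B)^{n}S^{n}=I$) and $\|w^{(n)}\|_{\infty}\le (M/R)^{n}\|y\|_{\infty}\to 0$, so $y\in J^{mix}_{Rf(B)}(0)$, exactly as in Theorem~\ref{main}(ii). To build $S$, write $f=p\cdot g$, where $p(z)=\prod_{j}(z-z_{j})$ collects the zeros of $f$ lying in $\overline{\mathbb{D}}$ (finitely many, with multiplicity, since $f\not\equiv 0$) and $g$ is holomorphic on $D(0,r)$ with no zeros on $\overline{\mathbb{D}}$. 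Since $\sigma(B)=\overline{\mathbb{D}}$, the spectral mapping theorem gives $0\notin\sigma(g(B))$, so $g(B)$ is invertible; and whenever $|z_{j}|<1$, the operator $B-z_{j}I$ is onto with a right inverse $v\mapsto u$, $u_{1}=0$, $u_{n+1}=z_{j}u_{n}+v_{n}$, which obeys $\|u\|_{\infty}\le(1-|z_{j}|)^{-1}\|v\|_{\infty}$. Composing these right inverses with $g(B)^{-1}$ yields $S$, so that $R_{0}$ may be taken to be $\|g(B)^{-1}\|\prod_{j}(1-|z_{j}|)^{-1}$.

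The step I expect to be the main obstacle is precisely this construction of $S$, because the recursion above returns an $l^{\infty}$-solution only when $|z_{j}|<1$: if $f$ has a zero $z_{j}$ on $\partial\mathbb{D}$ then $B-z_{j}I$ fails to be onto $l^{\infty}(\mathbb{N})$ (for instance $B-I$ misses every sequence whose partial sums are unbounded, such as $(1,1,1,\dots)$), so $f(B)$ need not have dense range, and then $Rf(B)$ cannot be $J^{mix}$-class for any $R$, since a short argument shows $J^{mix}_{Rf(B)}(x)\subseteq\bigcap_{m\ge1}\overline{\operatorname{range}\big((Rf(B))^{m}\big)}$ for every $x$. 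Hence the delicate point is to exploit the hypothesis $\overline{\mathbb{D}}\subseteq f(\overline{\mathbb{D}})$ so as to exclude (or otherwise circumvent) zeros of $f$ on $\partial\mathbb{D}$; once one knows that every zero of $f$ in $\overline{\mathbb{D}}$ actually lies in the open disk, the argument above runs verbatim. Finally, should one also want to describe $A^{mix}_{Rf(B)}$, the natural route — parallel to Theorem~\ref{main}(ii) — is to combine topological mixing of the restriction $Rf(B)|_{c_{0}(\mathbb{N})}$ with the spectral picture of the induced operator on $l^{\infty}(\mathbb{N})/c_{0}(\mathbb{N})$ supplied by Lemma~\ref{spectrumquotient}.
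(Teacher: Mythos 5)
Your overall strategy coincides with the paper's: factor $f=g\cdot p$ with $p$ collecting the (finitely many) zeros of $f$ in $\overline{\mathbb{D}}$, invert $g(B)$ via the spectral mapping theorem (since $\sigma(B)=\overline{\mathbb{D}}$ and $g$ does not vanish there), right-invert each factor $B-z_jI$ with the bound $(1-|z_j|)^{-1}$, iterate to produce preimages of an arbitrary $y$ tending to $0$, and finally transfer $J^{mix}_{Rf(B)}(0)=l^{\infty}(\mathbb{N})$ to a nonzero kernel vector via Lemma \ref{kernel}. Your packaging is in fact somewhat cleaner: a single bounded right inverse $S$ with $R_0=\|S\|$ replaces the paper's normalizing constant $R_1$, its passage to a power $n_0$ with $\|\tilde{h}(B)^{-n_0}\|\leq a$, and its appeal to Lemma \ref{lemma1fr}; and your recursion $u_{n+1}=z_ju_n+v_n$ handles $z_j=0$ without the case distinction made in the paper.

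The genuine gap is the one you yourself flag and leave open: nothing in your argument shows that the zeros of $f$ in $\overline{\mathbb{D}}$ lie in the open disk, and without that the right inverse of $B-z_jI$ does not exist --- as you correctly observe, $B-z_jI$ fails to have dense range when $|z_j|=1$ (Lemma \ref{lambda1}), so $Rf(B)$ could not then be $J^{mix}$-class for any $R$. The paper fills this step in one line: from $\overline{\mathbb{D}}\subseteq f(\overline{\mathbb{D}})$ it gets $\emptyset=\mathbb{D}\cap\partial f(\overline{\mathbb{D}})$, and then asserts $\mathbb{D}\cap\partial f(\overline{\mathbb{D}})=\mathbb{D}\cap f(\partial\mathbb{D})$, whence $0\notin f(\partial\mathbb{D})$ and all $|z_k|<1$. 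You should know that your unease at this point is justified: the inclusion $f(\partial\mathbb{D})\subseteq\partial f(\overline{\mathbb{D}})$ implicit in that assertion fails in general, because a point of $\partial\mathbb{D}$ may well be mapped into the interior of $f(\overline{\mathbb{D}})$. Concretely, $f(z)=cz(z-1)$ with $|c|$ large satisfies $\overline{\mathbb{D}}\subseteq f(\overline{\mathbb{D}})$ (since $f$ is an open map near its interior zero $z=0$, the image contains a disk $D(0,|c|\varepsilon)$), yet $f(1)=0$ with $1\in\partial\mathbb{D}$; by your own dense-range remark, $Rf(B)$ is then not $J^{mix}$-class for any $R$. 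So the missing step cannot be supplied under the stated hypothesis: one needs in addition that $0\notin f(\partial\mathbb{D})$ (equivalently, that every zero of $f$ in $\overline{\mathbb{D}}$ is interior), and under that extra assumption your argument runs to completion exactly as you describe.
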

\begin{proof} Since $f$ is holomorphic in $D(0,r)$ with $r>1$, $f$ has a finite number of zeros on $\overline{\mathbb{D}}$,
say $z_k$, $\ k=1,\ldots ,n $ for some $n\in \mathbb{N}$. The assumption $\overline{\mathbb{D}}\subseteq
f(\overline{\mathbb{D}})$ implies that $\emptyset=\mathbb{D}\cap\partial f(\overline{\mathbb{D}})=\mathbb{D}\cap
f(\partial\mathbb{D})$ from which it follows that $|z_k|<1$ for every $\ k=1,\ldots ,n $. Then there exists a
holomorphic function $g$ on $D(0,r')$ for some $r'\in (1,r]$ such that $f(z)=g(z)(z-z_1)\cdot\ldots\cdot(z-z_n)$
and $g(z)\neq 0$ for all $z\in D(0,r')$. It follows that $g(\overline{\mathbb{D}})\subseteq\mathbb{C}\backslash
D(0,\delta )$ for some $\delta>0$ and hence we can choose an $R_1>1$ such that
$R_1g(\overline{\mathbb{D}})\subset\mathbb{C}\backslash\overline{\mathbb{D}}$ and
$R_1\prod_{k=1}^{n}||z_k|-1|>1$. Define now  \[h(z):=R_1^2f(z)=\tilde{h}(z)\cdot p(z),\] where
$p(z):=R_1(z-z_1)\cdot\ldots\cdot(z-z_n)$ and $\tilde{h}(z)=R_1g(z)$.\smallskip

\noindent {\bf Claim}. Let $y\in l^{\infty}(\mathbb{N})$. There exists $x\in l^{\infty}(\mathbb{N})$ such that: \[p(B)x=y \,\,\, \textrm{and} \,\,\, \| x\| \leq
\frac{1}{R_1\prod_{k=1}^{n}||z_k|-1|} \| y\| .\]

\noindent\textit{Proof of Claim}. We have $p(B)=R_1(B-z_1I)\cdots (B-z_nI)$. Suppose that $z_j\neq 0$ for every $j=1,\ldots ,n$. Now we can write
$B-z_nI=-z_n(\frac{1}{(-z_n)}B+I)$ and then applying Lemma \ref{preimage} and following the proof of item (ii) in Theorem \ref{main} we get
$$ \left(\frac{1}{(-z_n)}B+I\right)x^{(n)}=\frac{1}{R_1(-z_n)}y \,\,\, \textrm{and} \,\,\,\| x^{(n)} \|\leq \frac{\| y\| }{
R_1|z_n| \left(\frac{1}{|z_n|} -1\right)} $$ for some $x^{(n)}\in l^{\infty}(\mathbb{N})$, or equivalently
$$ R_1(B-z_nI)x^{(n)}=y \,\,\, \textrm{and} \,\,\,\| x^{(n)} \|\leq \frac{\| y\|}{R_1(1-|z_n|)} .$$
Arguing as before, there exists $x^{(j-1)}\in l^{\infty}(\mathbb{N})$ such that
$$(B-z_{j-1}I)x^{(j-1)}=x^{(j)}\,\,\, \textrm{and} \,\,\,\| x^{(j-1)} \|\leq \frac{\| x^{(j)}\|}{1-|z_{j-1}|} ,$$
for every $j=n, n-1, \ldots ,2$. Setting $x:=x^{(1)}$, we conclude that
$$ p(B)x=y \,\,\, \textrm{and} \,\,\,\| x\|
\leq \frac{1}{R_1\prod_{k=1}^{n}||z_k|-1|} \| y\| .$$ Of course if $z_j=0$ for some $j\in \{ 1,\ldots ,n\}$ a
similar argument can be applied without any difficulty. This completes the proof of the claim.\smallskip

By the spectral theorem we get
\[\sigma(\tilde{h}(B))=\tilde{h}(\sigma(B))=R_1 g(\sigma(B))=
R_1 g(\overline{\mathbb{D}})\subset \mathbb{C}\backslash{\overline{\mathbb{D}}}.\] It follows that the inverse operator $\tilde{h}(B)^{-1}$ exists and
$\sigma(\tilde{h}(B)^{-1})\subset \mathbb{D}$. Therefore, for a given $0<a<1$ there exists $n_0\in\mathbb{N}$ such that
\[\left\|\tilde{h}(B)^{-n_0}\right\|\leq  a.\]
Consider now any $y\in l^\infty$. The equation $\tilde{h}(B)^{n_0}w^{(1)}=y$ has a unique solution $w^{(1)}\in l^{\infty}(\mathbb{N})$ and
\[\left\|w^{(1)}\right\|=\left\|\tilde{h}(B)^{-n_0}y\right\|\leq a\left\|y\right\|.\]
Define $$b:=\frac{1}{\left(R_1\prod_{k=1}^{n}||z_k|-1|\right)^{n_0}}<1.$$ Applying the Claim $n_0$ times, there
exists a vector $v^{(1)}\in l^{\infty}(\mathbb{N})$ such that $\left\|v^{(1)}\right\|\leq
b\left\|w^{(1)}\right\|$ and $p(B)^{n_0}v^{(1)}=w^{(1)}$. Altogether we get
\[h(B)^{n_0}v^{(1)}=y \text{ with } \left\|v^{(1)}\right\|\leq b\left\|w^{(1)}\right\|\leq ab\left\|y\right\|.\]
The equation $\tilde{h}(B)^{n_0}w^{(2)}=v^{(1)}$ has a unique solution $w^{(2)}\in l^{\infty}(\mathbb{N})$ and
\[\left\|w^{(2)}\right\|=\left\|\tilde{h}(B)^{-n_0}v^{(1)}\right\|\leq a\left\|v^{(1)}\right\|.\]
Again by the Claim there exists $v^{(2)}\in l^{\infty}(\mathbb{N})$ such that $\left\|v^{(2)}\right\|\leq
b\left\|w^{(2)}\right\|$ and $p(B)^{n_0}v^{(2)}=w^{(2)}$. From the above we have
\[h(B)^{2n_0}v^{(2)}=y \text{ and } \left\|v^{(2)}\right\|\leq b\left\|w^{(2)}\right\|\leq (ab)^2\left\|y\right\|.\]
Proceeding inductively, we construct a sequence $(v^{(m)})$ in $l^{\infty}(\mathbb{N})$ such that
\[h(B)^{mn_0}v^{(m)}=y \text{ and } \left\|v^{(m)}\right\|\leq (ab)^m\left\|y\right\|\]
for every positive integer $m$. Since $ab<1$, $v^{(m)}\to 0$ and we conclude that $J_{h(B)^{n_0}}^{mix}(0)=l^{\infty}(\mathbb{N})$. Now, in view of Lemma \ref{lemma1fr},
$J_{h(B)}^{mix}(0)=l^{\infty}(\mathbb{N})$. Observe that $p(B)$ has non-trivial kernel, which in turn implies that $h(B)$ has non-trivial kernel. Hence, for every
non-zero vector $x\in Ker(h(B))$, $J_{h(B)}^{mix}(x)=l^{\infty}(\mathbb{N})$. Clearly, for every $R\geq R_0$, where $R_0:=R_1^2$, the operator $Rf(B)$ is locally
topologically mixing in $l^{\infty}(\mathbb{N})$. This completes the proof of the theorem.
\end{proof}

\begin{corollary}
Let $f$ be a holomorphic function on $D(0,r)$ for some $r>1$ such that $0$ belongs to the interior of $f(\overline{\mathbb{D}})$. Then there exists $R_0>0$ such that $R
f(B):l^{\infty}(\mathbb{N})\to l^{\infty}(\mathbb{N})$ is $J^{mix}$-class for every $R>R_0$.
\end{corollary}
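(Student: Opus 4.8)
The plan is to deduce the corollary from Theorem \ref{thm2} by a straightforward rescaling of $f$. Since $f(\overline{\mathbb{D}})$ is a compact subset of $\mathbb{C}$ and, by hypothesis, $0$ lies in its interior, I can choose $\varepsilon>0$ small enough that the closed disk $\overline{D(0,\varepsilon)}$ is contained in the interior of $f(\overline{\mathbb{D}})$, and in particular $\overline{D(0,\varepsilon)}\subseteq f(\overline{\mathbb{D}})$.

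Next I would introduce the auxiliary function $g:=\varepsilon^{-1}f$, which is holomorphic on the same disk $D(0,r)$ with $r>1$. Multiplying the inclusion $\overline{D(0,\varepsilon)}\subseteq f(\overline{\mathbb{D}})$ by $\varepsilon^{-1}$ yields $\overline{\mathbb{D}}\subseteq g(\overline{\mathbb{D}})$, so $g$ meets the hypotheses of Theorem \ref{thm2}. Applying that theorem, there is a number $R_1>0$ such that $R'g(B)$ is $J^{mix}$-class on $l^{\infty}(\mathbb{N})$ for every $R'>R_1$.

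Finally I would undo the rescaling: for any real $R$ one has $Rf(B)=(R\varepsilon)\,g(B)$, hence $Rf(B)$ is $J^{mix}$-class whenever $R\varepsilon>R_1$, that is, whenever $R>R_0:=R_1/\varepsilon$. This is the asserted conclusion. There is no serious obstacle in this argument; the only point requiring a little care is the passage from ``$0$ belongs to the interior of $f(\overline{\mathbb{D}})$'' to an honest closed disk $\overline{D(0,\varepsilon)}$ lying inside $f(\overline{\mathbb{D}})$, which is exactly what makes the hypothesis $\overline{\mathbb{D}}\subseteq g(\overline{\mathbb{D}})$ of Theorem \ref{thm2} available after the scaling.
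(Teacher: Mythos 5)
Your rescaling argument is correct and is exactly the intended deduction: the paper states this corollary without proof as an immediate consequence of Theorem \ref{thm2}, and passing from ``$0$ in the interior of $f(\overline{\mathbb{D}})$'' to $\overline{D(0,\varepsilon)}\subseteq f(\overline{\mathbb{D}})$ and then to $\overline{\mathbb{D}}\subseteq (\varepsilon^{-1}f)(\overline{\mathbb{D}})$ is the right way to make the theorem's hypothesis available. The final bookkeeping $Rf(B)=(R\varepsilon)(\varepsilon^{-1}f)(B)$ with $R_0:=R_1/\varepsilon$ is also correct.
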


\subsection{Properties of the set $A^{mix}_T$}

Our last result in this section concerns the structure of the set $A^{mix}_T$ of a $J^{mix}$-class operator on
$l^{\infty}(\mathbb{N})$. Observe that if $T$ is $J^{mix}$-class on a Banach space $X$ then $A^{mix}_T$ is a
closed subspace. So far, in all examples of $J^{mix}$-class operators $T$ on $l^{\infty}(\mathbb{N})$, the
closed subspace $A^{mix}_T$ is separable. For instance, if $\lambda\in \mathbb{C}$ with $|\lambda |>1$ then
$\lambda B: l^{\infty}(\mathbb{N})\to l^{\infty}(\mathbb{N})$ is $J^{mix}$-class and $A^{mix}_{\lambda
B}=c_0(\mathbb{N})$, see \cite{cosma1}. A similar result holds for the operator $I+\lambda
B:l^{\infty}(\mathbb{N})\to l^{\infty}(\mathbb{N})$ whenever $|\lambda |>2$, as Theorem \ref{main} shows. In the
following proposition we exhibit an operator $T$ on $l^{\infty}(\mathbb{N})$ such that $A^{mix}_T$ is
non-separable.

\begin{proposition}
There exists a $J^{mix}$-class operator $T$ on $l^{\infty}(\mathbb{N})$ such that $A^{mix}_T$ is non-separable.
\end{proposition}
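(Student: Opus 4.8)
The plan is to manufacture such a $T$ as a countable $l^{\infty}$-direct sum of scalar multiples of the backward shift, arranged so that the kernel is as large as $l^{\infty}(\mathbb N)$ itself. Concretely, fix a bijection $\mathbb N\times\mathbb N\to\mathbb N$; it induces an isometric isomorphism of $l^{\infty}(\mathbb N)$ onto
\[
X:=\Bigl\{\,u=(u_j)_{j\in\mathbb N}\ :\ u_j\in l^{\infty}(\mathbb N),\ \ \|u\|:=\sup_{j}\|u_j\|_{\infty}<+\infty\,\Bigr\},
\]
which we identify with $l^{\infty}(\mathbb N)$. On $X$ define $T$ by $T\bigl((u_j)_j\bigr):=(2Bu_j)_j$; this is bounded linear with $\|T\|=2$. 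Since being $J^{mix}$-class and the sets $A_T^{mix}$ are transported by conjugation with an isometric isomorphism, it is enough to verify the assertion for this $T$ on $X$. (One could equally use $\lambda B$ for any $|\lambda|>1$, or $I+\lambda B$ with $|\lambda|>2$ in view of Theorem \ref{main}.)

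First I would check that $J_T^{mix}(0)=X$. Given $y=(y_j)_j\in X$ and $n\in\mathbb N$, I solve $T^n w=y$ blockwise: for each $j$ choose $w_j\in l^{\infty}(\mathbb N)$ with $(2B)^n w_j=y_j$ by setting the first $n$ coordinates of $w_j$ equal to $0$ and its $(n+i)$-th coordinate equal to $2^{-n}(y_j)_i$ for $i\ge 1$. Then $\|w_j\|_{\infty}=2^{-n}\|y_j\|_{\infty}\le 2^{-n}\|y\|$ for every $j$, so $w^{(n)}:=(w_j)_j$ lies in $X$, satisfies $T^n w^{(n)}=y$ exactly, and $\|w^{(n)}\|\le 2^{-n}\|y\|\to 0$. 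Taking $x_n:=w^{(n)}$ we get $x_n\to 0$ and $T^n x_n\to y$, hence $y\in J_T^{mix}(0)$; as $y$ was arbitrary, $J_T^{mix}(0)=X$.

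Next I would identify $Ker\,T$. Since $Ker(2B)=\{(c,0,0,\dots):c\in\mathbb C\}$, a vector $(u_j)_j\in X$ belongs to $Ker\,T$ exactly when each $u_j=(c_j,0,0,\dots)$, and the membership condition in $X$ then reads $\sup_j|c_j|<+\infty$. Thus $Ker\,T$ is isometrically isomorphic to $l^{\infty}(\mathbb N)$, hence a non-separable closed subspace of $X$. Applying Lemma \ref{kernel} together with $J_T^{mix}(0)=X$ gives $J_T^{mix}(u)=X$ for every $u\in Ker\,T$; choosing any non-zero such $u$ shows that $T$ is $J^{mix}$-class, and at the same time $Ker\,T\subseteq A_T^{mix}$. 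Since $Ker\,T$ is non-separable, $A_T^{mix}$ is non-separable as well, which proves the proposition.

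There is no real obstacle here; the only point requiring a little attention is that the norm estimate in the blockwise solution of $T^n w=y$ is uniform in $j$ (it equals $2^{-n}\|y\|$ in every block), which is what guarantees that the witnessing vectors $w^{(n)}$ tend to $0$ in the $l^{\infty}$-direct sum norm and not merely coordinatewise.
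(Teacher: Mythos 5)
Your proof is correct, and the overall strategy coincides with the paper's --- produce an operator with $J_T^{mix}(0)=l^{\infty}(\mathbb{N})$ via uniformly contracting exact preimages, exhibit a non-separable subspace inside $\ker T$, and conclude with Lemma \ref{kernel} --- but the construction realizing it is genuinely different. The paper fixes an isomorphism $S\colon l^{\infty}(\mathbb{N})\to l^{\infty}(\mathbb{N})\oplus l^{\infty}(\mathbb{N})$, takes $T=\lambda B\circ P\circ S$ with $P$ the projection onto the first summand and $|\lambda|>\|S^{-1}\|$, and obtains non-separability of $\ker T$ from the copy $S^{-1}(\{0\}\oplus l^{\infty}(\mathbb{N}))$ that the projection deliberately annihilates. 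You instead split $l^{\infty}(\mathbb{N})$ as a countable $l^{\infty}$-direct sum of copies of itself and act diagonally by $2B$, so the kernel is the $l^{\infty}$-sum of the one-dimensional kernels of $2B$, i.e.\ an isometric copy of $l^{\infty}(\mathbb{N})$. Your route is slightly cleaner in two respects: the identification is isometric, so there is no constant $\|S^{-1}\|$ to beat when choosing the scalar, and the blockwise preimages solve $T^{n}w^{(n)}=y$ exactly with $\|w^{(n)}\|=2^{-n}\|y\|$, so no iteration of a one-step estimate is required. What the paper's version buys is an operator written directly as a composition of named maps on $l^{\infty}(\mathbb{N})$ without passing to a re-indexed model. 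Your closing remark --- that the block estimates must be uniform in $j$ for $w^{(n)}\to 0$ in the sup norm --- is exactly the right point to flag.
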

\begin{proof}
Fix an isomorphism $S:l^{\infty}(\mathbb{N})\rightarrow l^{\infty}(\mathbb{N})\oplus l^{\infty}(\mathbb{N})$ and consider the projection $P:l^{\infty}(\mathbb{N})\oplus
l^{\infty}(\mathbb{N})\rightarrow l^{\infty} (\mathbb{N})$ defined by $P(x\oplus y):=x$. Now choose $|\lambda|>\left\|S^{-1}\right\|$ and define the operator
$T:l^\infty(\mathbb{N})\rightarrow l^\infty(\mathbb{N})$ by $T:=\lambda B\circ P\circ S$. Then $T$ is $J^{mix}$-class with $M:=S^{-1}(\{0\}\oplus
l^\infty(\mathbb{N}))\subset A_{T}^{mix}$. To see this take any $y=(y_n)\in l^{\infty}(\mathbb{N})$. Define $x^{(1)}:=(0,\lambda^{-1}y_1,\lambda^{-1}y_2,\ldots)$. Then
$\lambda Bx^{(1)}=y$ and $\left\|x^{(1)}\right\|=\frac{1}{|\lambda|}\left\|y\right\|$. Note that $P(x^{(1)}\oplus 0)=x^{(1)}$ with $\left\|x^{(1)}\oplus
0\right\|=\left\|x^{(1)}\right\|$. Choose now $z^{(1)}\in l^{\infty}(\mathbb{N})$ with $Sz^{(1)}=x^{(1)}\oplus 0$.  Then $Tz^{(1)}=y$ and
\[\left\|z^{(1)}\right\|\leq\left\|S^{-1}\right\|\left\|x^{(1)}\oplus 0\right\|=\left\|S^{-1}\right\|\cdot\left\|x^{(1)}
\right\|=\frac{\left\|S^{-1}\right\|}{|\lambda|}\left\|y\right\|=q\left\|y\right\| ,\] where $q:=\left\|S^{-1}\right\||\lambda|^{-1}<1$. Proceeding in this way, we find
$z^{(2)}\in l^{\infty}(\mathbb{N})$ with $Tz^{(2)}=z^{(1)}$ and $\left\|z^{(2)}\right\|\leq q\left\|z^{(1)}\right\|\leq q^2\left\|y\right\|$. Inductively, we get a
sequence $(z^{(n)})$ in $l^{\infty}(\mathbb{N})$ such that $T^{n}z^{(n)}=y$ and $z^{(n)}\rightarrow 0$. Since $y$ is arbitrary we conclude that
$J_{T}^{mix}(0)=l^{\infty}(\mathbb{N})$. It is easy to see that $M:=S^{-1}(\{0\}\oplus l^{\infty}(\mathbb{N}))$ is non separable and $M \subset KerT$, where $KerT$ is
the kernel of $T$. Since $J_{T}^{mix}(0)=l^{\infty}(\mathbb{N})$ we have $KerT\subset A_{T}^{mix}$, and the conclusion follows.
\end{proof}

As we observed above for any $T\in L(X)$ the set $A_T^{mix}$ is a closed subspace. However  for the set $A_T$
the situation is less clear and so we ask the following

\smallskip

\noindent {\bf Question}. Does there exist a $J$-class operator $T$ acting on some Banach space $X$ such that $A_T$ is not a vector space?

\end{document}